\newtheorem{theorem}{Theorem}[section]
 \theoremstyle{definition}
 \newtheorem{definition}[theorem]{Definition}
 \theoremstyle{remark}
 \newtheorem{remark}[theorem]{Remark}
 \numberwithin{equation}{subsection}
\newcommand{\mc}{\operatorname{{\rm MC}}}
 \newcommand{\ad}{\operatorname{{\rm ad}}}
\newcommand{\bz}{\mathbb Z}
\newcommand{\bn}{\mathbb N}
\newcommand{\bk}{\mathbb K}
\newcommand{\bq}{\mathbb Q}
 \newcommand{\lib }{\mathbb{L}}
    \newcommand{\lasu}{{\mathfrak{L}}}
\newcommand{\aes}{\alpha }
\newcommand{\bes}{\beta }
\newcommand{\ees}{x }
\newcommand{\bchl }{\sum_{k=0}^\infty \sum_{p+q=k}(-1)^q\frac{B_{p+q}}{p!q!}\ees^p(\bes-\aes)\ees^q}
\begin{document}
\title{The gauge action, DG Lie algebras and identities for Bernoulli numbers}


\author{Urtzi Buijs\footnote{Partially supported by the \emph{Ministerio de Econom\'{\i}a y Competitividad} grants MTM2010-15831, MTM2013-41768-P, by the  grants FQM-213, 2009-SGR-119, and by the Marie Curie COFUND programme U-mobility, co-financed by the University of M\'alaga, the European Commision FP7 under GA No. 246550, and Ministerio de Econom\'{\i}a y Competitividad (COFUND2013-40259).},\hskip2mm J. G. Carrasquel-Vera\footnote{Partially supported by the  \emph{Ministerio de Econom\'{\i}a y Competitividad} grant MTM2010-18089.}  \ and Aniceto Murillo \footnote{Partially supported by the  \emph{Ministerio de Econom\'{\i}a y Competitividad} grant MTM2013-41768-P and by the Junta de
Andaluc\'\i a grants FQM-213 and P07-FQM-2863. \vskip
 4pt
 Key words and phrases: Gauge action. Bernoulli numbers. Homotopy theory of Lie algebras.}}

\maketitle

\begin{abstract}
In this paper we prove a family of identities for Bernoulli numbers parameterized by triples of integers $(a,b,c)$ with $a+b+c=n-1$, $n\ge 4$. These identities are deduced by  translating into homotopical terms the gauge action on the Maurer Cartan set of a differential graded Lie algebra.
We show that  Euler and Miki's identities, well known and apparently non related formulas,
 are  linear combinations of our family and they satisfy a particular symmetry relation.
\end{abstract}

\section*{Introduction}
In our setting, an {\em Euler-type identity} is defined to be a convolution equation of the form
$$
\sum_{\substack{k=0\\ k\,\,\text{even}}}^n \lambda _{k} B_kB_{n-k}=0, \quad\text{for any even $n\ge 4$,}
$$
where $\lambda_{k}$ are rational numbers depending on $k$ and $n$, and $B_k$ are the Bernoulli numbers for which \cite{libro} is an excellent general reference. The Euler equation,
\begin{equation}\label{euler}
-(n+1)B_n=\sum_{\substack{k=2\\ k\,\,\text{even}}}^{n-2}\binom{n}{k}B_kB_{n-k},\ n \ \text{{even with}}\ n\geq 4,
\end{equation}
as well as the, now classical, Miki's  identity
 \cite{miki},

\begin{equation}\label{miki}
2H_n{B_n}=\sum_{\substack{k=2\\ k\,\,\text{even}}}^{n-2}\frac{n}{k(n-k)}\Bigr( 1-\binom{n}{k}\Bigl){B_k}{B_{n-k}},\ n \ \text{{even with}}\ n\geq 4,
\end{equation}
where $H_n=\sum _{j=1}^n \frac{1}{j}$ is the harmonic number, are particular instances of Euler-type identities.


Combinatorial, arithmetical, analytical and geometrical methods have been used to deduced Euler-type identities \cite{crabb,dunne,fapan,gessel,pan2,patan}. Some of them, including Euler and Miki's,  are special cases of a large family of Euler-type identities whose existence we prove in this paper. Namely,

\begin{theorem}\label{main}
For any even integer $n\geq 4$ and any triple of {non negative} integers $(a,b,c)$ such that $a+b+c=n-1$,
 an Euler-type identity holds with
\begin{equation}\label{mainequations}
\lambda_{k}={\lambda_{k}^{(a,b,c)}}=\binom{n}{k}\Bigg[(-1)^c\binom{n-k}{c}\sum_{\ell=\text{max\,}(0, k-b)}^{\text{min\,}(a,k)}(-1)^{{k-}\ell }\binom{k}{\ell }
\end{equation}
\begin{equation*}
\ \ \ \ \ \ -(-1)^a\binom{n-k}{a}\sum_{\ell=\text{max\,}(0, k-b)}^{\text{min\,}(c,k)}(-1)^{\ell }\binom{k}{\ell }\Bigg].
\end{equation*}
\end{theorem}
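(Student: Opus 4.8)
The plan is to obtain the theorem as a coefficient identity extracted from the explicit Bernoulli-number description of the gauge relation recorded above. Concretely, I would work inside the completed free DGL carrying the gauge action, with its degree-$0$ generator $x$ and the generator $v$ out of which the Maurer--Cartan elements $\alpha$ and $\beta$ are built, and I would pass to the completed enveloping (tensor) algebra $T(x,v)$, where the gauge relation $\bch$ becomes the operator identity
\begin{equation*}
\frac{\ad_x}{e^{\ad_x}-1}(\beta-\alpha)=\bchl .
\end{equation*}
The essential structural point is that, in this model, $\beta-\alpha$ is itself produced by a Bernoulli-weighted operator in $\ad_x$ applied to $v$, so that after substitution the relation is, in each total degree, a polynomial identity that is \emph{quadratic} in the Bernoulli numbers: one factor originates from the outer operator $\frac{\ad_x}{e^{\ad_x}-1}$ and a second from the expression it acts on.

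I would then expand everything in the word basis of $T(x,v)$. The basic tool is $\ad_x^k(w)=\sum_{p+q=k}(-1)^q\binom{k}{p}x^p\,w\,x^q$, from which $\frac{B_k}{k!}\binom{k}{p}=\frac{B_{p+q}}{p!q!}$ reproduces exactly the coefficients written in $\bchl$. Since both sides of the gauge relation are genuine Lie elements, the identity holds componentwise, and the heart of the argument is to compare the coefficients of a single word built from the two $v$-insertions separated by three blocks of $x$'s whose lengths are governed by $(a,b,c)$ with $a+b+c=n-1$. On one side the two insertions carry the two Bernoulli factors $B_k$ and $B_{n-k}$, and the copies of $x$ produced by the two operators must be split among the three blocks. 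Distributing the $k$ copies from the first operator between the $a$-block and the $b$-block produces precisely the truncated alternating sum $\sum_{\ell=\max(0,k-b)}^{\min(a,k)}(-1)^\ell\binom{k}{\ell}$, the bounds being forced by $0\le\ell\le a$ and $0\le k-\ell\le b$, while the remaining copies are distributed against the $c$-block with the factor $(-1)^c\binom{n-k}{c}$; the Bernoulli weights $\frac{B_k}{k!}\frac{B_{n-k}}{(n-k)!}$ should then recombine into the overall normalization $\binom{n}{k}B_kB_{n-k}$. The second, $a\leftrightarrow c$-swapped term of $\lambda_k$ arises from the opposite ordering of the two insertions, which enters with the opposite sign because of the graded antisymmetry of the bracket (equivalently, from interchanging the roles of $\alpha$ and $\beta$). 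Since the complementary side of the gauge relation has no component on this word, the total coefficient must vanish, which is exactly the Euler-type identity $\sum_k\lambda_k B_kB_{n-k}=0$.

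Finally I would settle the parity hypothesis. For odd $n$ the two $a\leftrightarrow c$ contributions cancel identically, and the surviving terms only involve odd Bernoulli numbers, which vanish beyond $B_1$; thus the statement has content precisely for even $n\ge4$. This parity is also the origin of the symmetry relation advertised in the abstract, and one recovers Euler's and Miki's formulas by specializing $(a,b,c)$ and taking suitable $\bq$-linear combinations.

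The step I expect to be the main obstacle is the coefficient extraction of the middle paragraph: correctly translating the noncommutative products $x^p w\,x^q$ coming from $\ad_x^k$ into the coefficient of the chosen Lie word, and matching the resulting double convolution of Bernoulli numbers---together with its Vandermonde-type bounds $\max(0,k-b)$ and $\min(a,k)$, the binomials $\binom{n}{k}$, $\binom{n-k}{a}$, $\binom{n-k}{c}$, and every sign---to the stated $\lambda_k$. Verifying that the complementary side of the gauge relation genuinely contributes nothing to this word is the other point requiring care.
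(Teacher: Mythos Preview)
Your combinatorial target---extracting the coefficient of a word $x^a y x^b y x^c$ from a tensor-algebra identity quadratic in Bernoulli numbers---matches the paper's, and the truncated alternating sums and binomials you anticipate are exactly the ones that appear. The gap is in the \emph{source} of the identity. The displayed ``gauge relation'' $\frac{\ad_x}{e^{\ad_x}-1}(\beta-\alpha)=\sum_{p,q}(-1)^q\frac{B_{p+q}}{p!q!}x^p(\beta-\alpha)x^q$ is a tautology: it is simply the tensor-algebra expansion of $\sum_k\frac{B_k}{k!}\ad_x^k(\beta-\alpha)$ via $\ad_x^k w=\sum_{p+q=k}(-1)^q\binom{k}{p}x^pwx^q$, and imposes no constraint whatsoever on the $B_k$. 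Your second Bernoulli factor is supposed to come from the claim that ``$\beta-\alpha$ is itself produced by a Bernoulli-weighted operator in $\ad_x$ applied to $v$,'' but in the Lawrence--Sullivan construction $\alpha$ and $\beta$ are \emph{free generators}: there is no $v$, and $y=\beta-\alpha$ carries no Bernoulli numbers. As written, your scheme yields an identity only linear in the $B_k$, hence vacuous.

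What actually supplies the second factor is $D^2x=0$ in $\widehat T(\alpha,\beta,x)$, and this is what the paper analyzes. Writing $\Phi=\sum c_{(p,q)}x^pyx^q$ for the Bernoulli part of $Dx$, one has $Dx^m=x^m\beta-\beta x^m+\sum_{i+j=m-1}x^i\Phi x^j$; hence when $D$ is applied to $\Phi$ a second copy of $\Phi$ appears from differentiating each $x$-block, and \emph{that} copy carries the second Bernoulli factor. The two terms of $\lambda_k$ arise from the two summands $(\sum x^i\Phi x^j)yx^q$ and $-x^py(\sum x^i\Phi x^j)$, the relative sign being a Leibniz sign (the odd $y$ sits between $x^p$ and $x^q$), not bracket antisymmetry. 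Finally, your assertion that ``the complementary side contributes nothing'' is not quite right: when $b=0$ the term $x^py^2x^q$ contributes an extra $(-1)^c\frac{B_{a+c}}{a!c!}$, and the paper checks that this is exactly cancelled by the $B_1B_{n-1}$ term of the final sum.
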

We now briefly describe the methods we use to prove this result.

A fundamental principle of deformation theory due to Deligne asserts that every
deformation functor is governed by a differential graded Lie algebra  via solutions
of Maurer-Cartan equation modulo the gauge action. This permits us to view deformations of structures in completely different settings under a common algebraic scope. The quantization theorem of Kontsevich \cite{kont} or the deformation theory of Floer homology of lagrangian submanifolds and its relation with mirror symmetry \cite{fu} are good examples of this.

Given a differential graded Lie algebra, the gauge action on its Maurer-Cartan set, which may be understood as an algebraic abstraction of the behavior of gauge infinitesimal transformations in classical gauge theory, can be encoded via  the {\em Lawrence-Sullivan construction} $\lasu$, see \cite[Prop.3.1]{bumu1} or \cite[4.6]{bumu2}. Then, we show that Theorem \ref{main} above is equivalent to Theorem 1 of \cite{lasu}, that is,   $\lasu$ is indeed a differential graded Lie algebra. This is done by transporting $\lasu$ to the category of differential graded algebras via the universal enveloping functor and forcing it to be a cylinder in the corresponding homotopy category \cite[Thm. 3.3]{bumu1}.

Then, we show that the gauge action, the existence of such a cylinder in the homotopy category of differential graded algebras, and the Euler-type identities in Theorem \ref{main}, are equivalent formulations of the same statement.

In the next section, devoted to the proof of our main result and its condensed version, we will develop in detail the above sketch. In Section 2, see Theorem \ref{thmmiki}, we show that Miki and Euler's  identities are particular instances of our family. Moreover, in  Theorem \ref{thmeuler}, we find an unexpected symmetry relation between them.

\section{Euler-type identities and the gauge action}

This section, entirely devoted to the proof of Theorem \ref{main}, begins with a brief but explicit description of the algebraic version of the gauge action. From now on, any considered algebraic object is assumed to be $\bz$-graded and over a coefficient field $\bk$ of characteristic zero.

Recall that a {\em differential graded Lie algebra}, DGL henceforth, is a graded vector space $L=\oplus_{n\in\bz}L_n$ endowed of a bilinear bracket of degree zero,
$$
[\,\,,\,]\colon L\otimes L\to L,
$$
and a differential  $\partial\colon L\to L$ of degree $-1$ satisfying,
\begin{eqnarray*}
[x,y]&=&(-1)^{|x||y|+1}[y,x],\\
\bigl[x,[y,z]\bigr]&=&\bigl[[x,y],z\bigr]+(-1)^{|x||y|}\bigl[y,[x,z]\bigr],\\
\partial[x,y]&=&[\partial x,y]+(-1)^{|x|}[x,\partial y],
\end{eqnarray*}
for any homogeneous elements $x,y,z\in L$.

 The {\em Maurer-Cartan set} $\mc(L)$ of $L$ is formed by those elements $a\in L_{-1}$ whose differential satisfy the {\em Maurer-Cartan equation},
$$
\partial \alpha+\frac{1}{2}[\alpha,\alpha]=0.
$$
The \emph{completion} $\widehat{L}$ of a graded Lie algebra $L$ is the limit
$$\widehat{L} = \varinjlim_n L/L^n$$
where $L^1 = L$ and for $n\geq 2$, $L^n = [L, L^{n-1}]$. A Lie algebra $L$ is  \emph{complete} if it is isomorphic to its completion. For such a Lie algebra, the {\em gauge action} of $L_0$ on $\mc(L)$ is defined as
$$
x*\alpha=\sum_{i\ge0}\frac{\ad_x^i(\alpha)}{i!}-\sum_{i\ge0}\frac{\ad_x^{i}(\partial x)}{(i+1)!},\quad x\in L_0,\quad \alpha\in\mc(L).
$$
 This can be geometrically interpreted as follows \cite{lasu}: in the DGL given by  $L[t]=L\otimes\bk[t]$ consider the formal differential equation
 $$
\begin{aligned}
&u'(t)=\partial x -\ad_xu(t),\\
&u(0)=\alpha.
\end{aligned}
$$
Then, writing $u(t)$ as a formal power series, one has $x*\alpha=u(1)$. In other words, thinking of $\mc(L)$ as points (say of a ``formal manifold"), the gauge action can be thought of as the flow at time $1$, generated by $x$ via the above equation, with initial point $\alpha$.

The minimal algebraic expression of this  was given in \cite{lasu}: Consider
$$\lasu=\widehat\lib(\alpha,\beta,x)
$$
the {\em complete free Lie algebra}, that is, the completion of the free Lie algebra  generated by the Maurer-Cartan elements $\alpha,\beta$ and in which the flow generated by $x$ moves from $\alpha$ to $\beta$, i.e., $x*\alpha=\beta$. Then,

\begin{theorem}\label{lasu1}{\em\cite[Thm. 1.4]{bufemutan},\cite[Thm.1]{lasu}} The  unique choice for $\partial x$ which makes $\lasu$ a DGL is
$$
\partial x=\ad_x\beta+\sum_{i\ge0}\frac{B_i}{i!}\ad^i_x(\beta-\alpha).
$$
\end{theorem}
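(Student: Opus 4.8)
The plan is to exploit that $\lasu=\widehat\lib(\alpha,\beta,x)$ is free, so that a degree $-1$ derivation $\partial$ is specified freely by its values on the three generators, and $\partial^2$ is again a derivation (of even degree $-2$); hence $\partial^2=0$ need only be checked on $\alpha,\beta,x$. The Maurer--Cartan requirement on $\alpha,\beta$ forces $\partial\alpha=-\tfrac12[\alpha,\alpha]$ and $\partial\beta=-\tfrac12[\beta,\beta]$, and a one-line graded computation using $\bigl[[\gamma,\gamma],\gamma\bigr]=0$ for $\gamma$ of odd degree gives $\partial^2\alpha=\partial^2\beta=0$ automatically. Thus the whole statement reduces to two tasks: determine $\partial x$, and prove $\partial^2 x=0$.

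To determine $\partial x$ I would read the defining relation $x*\alpha=\beta$ through the flow of the geometric interpretation: let $u(t)$ solve $u'(t)=\partial x-\ad_x u(t)$ with $u(0)=\alpha$. Solving this linear equation in the completion yields $u(1)=e^{-\ad_x}\alpha+\frac{1-e^{-\ad_x}}{\ad_x}\,\partial x$, so imposing $u(1)=\beta$ gives the linear equation $\frac{1-e^{-\ad_x}}{\ad_x}\,\partial x=\beta-e^{-\ad_x}\alpha$. The operator $\frac{1-e^{-\ad_x}}{\ad_x}=\mathrm{id}-\tfrac12\ad_x+\cdots$ has invertible leading term and is therefore invertible on $\widehat\lib(\alpha,\beta,x)$, which furnishes at once the existence and the uniqueness of $\partial x$. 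Inverting it by means of the Bernoulli generating function $\frac{z}{e^z-1}=\sum_i\frac{B_i}{i!}z^i$ (equivalently $\frac{z}{1-e^{-z}}=z+\sum_i\frac{B_i}{i!}z^i$) and simplifying produces exactly $\partial x=\ad_x\beta+\sum_{i\ge0}\frac{B_i}{i!}\ad^i_x(\beta-\alpha)$.

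The crux is then $\partial^2 x=0$, and the cleanest route I would take is to track the Maurer--Cartan defect along the flow rather than expand the closed formula. Setting $F(t)=\partial u(t)+\tfrac12[u(t),u(t)]$, differentiating in $t$, substituting $u'=\partial x-\ad_x u$, and reducing with graded antisymmetry and Jacobi, one finds that $F$ obeys the same linear equation as $u$ but with constant source $\partial^2 x$, namely $F'(t)=\partial^2 x-\ad_x F(t)$. Since $\alpha$ is Maurer--Cartan we have $F(0)=0$, whence $F(t)=\frac{1-e^{-t\ad_x}}{\ad_x}\,\partial^2 x$; since $\beta$ is Maurer--Cartan and $u(1)=\beta$ we also have $F(1)=0$, and invertibility of $\frac{1-e^{-\ad_x}}{\ad_x}$ forces $\partial^2 x=0$. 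Together with $\partial^2\alpha=\partial^2\beta=0$ this makes $\lasu$ a DGL. I expect the main obstacle to lie precisely here: getting the graded signs exactly right when deriving $F'=\partial^2 x-\ad_x F$, and justifying the formal-flow manipulations, that is, convergence of the series and invertibility of the operators, in the completed free Lie algebra.

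Finally, to recover the arithmetic content I would abandon the flow shortcut and instead substitute the explicit formula for $\partial x$ into $\partial^2 x$, collecting the coefficients of the iterated brackets in $\alpha,\beta,x$; the vanishing of $\partial^2 x$ then becomes a family of numerical relations among the $B_i$, which is exactly Theorem \ref{main}. This is why the two statements are equivalent, and it explains the two complementary strategies available: the short flow argument for the DGL claim, and the explicit expansion for the Bernoulli identities.
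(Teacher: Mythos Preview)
Your proposal is correct and self-contained, but it follows a genuinely different route from the paper. The paper does not give an independent proof of Theorem~\ref{lasu1}; it quotes the result from \cite{lasu}, passes to the universal enveloping algebra $\tasu=\widehat T(\alpha,\beta,x)$, and then expands $D^2x$ explicitly into monomials $x^ayx^byx^c$, showing that the vanishing of each coefficient $\gamma_{(a,b,c)}$ is precisely the Euler-type identity with coefficients~(\ref{mainequations}). Thus, in the paper, $\partial^2x=0$ is \emph{equivalent} to the family of Bernoulli identities of Theorem~\ref{main}, and the actual truth of either statement is imported from \cite{lasu}.

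By contrast, you give a direct proof: you solve the flow equation to obtain the closed formula for $\partial x$ (your computation $\frac{1-e^{-\ad_x}}{\ad_x}\,\partial x=\beta-e^{-\ad_x}\alpha$ does invert to the stated expression, via $\frac{z}{1-e^{-z}}=z+\sum_i\tfrac{B_i}{i!}z^i$), and then you prove $\partial^2x=0$ by the elegant trick of differentiating the Maurer--Cartan defect $F(t)=\partial u+\tfrac12[u,u]$ and checking that it satisfies $F'=\partial^2x-\ad_xF$ with $F(0)=F(1)=0$. The graded computation you flag as delicate does go through: using $|x|=0$, $|u|=-1$ one finds $[[x,u],u]=\tfrac12[x,[u,u]]$, and the cross terms $[\partial x,u]$ cancel exactly. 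Invertibility of $\frac{1-e^{-\ad_x}}{\ad_x}$ in the completion is clear from its unit constant term, so both uniqueness of $\partial x$ and the conclusion $\partial^2x=0$ are justified.

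What each approach buys: your ODE argument is short, conceptual, and avoids any combinatorics, but it does not produce the Bernoulli identities; the paper's monomial expansion in $\tasu$ is longer and more computational, but it is precisely what converts the DGL statement into the arithmetic content of Theorem~\ref{main}. You identify this trade-off correctly in your final paragraph.
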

Remark that the gauge action in any differential graded Lie algebra $L$ is ``controlled" by $\lasu$. Indeed, see \cite[Prop.3.1]{bumu1} or \cite[4.6]{bumu2}, any two elements $a,b\in\mc(L)$ are gauge equivalent if and only if there is a DGL morphism $\lasu\to L$ sending $\alpha$ to $a$ and $\beta$ to $b$. In homotopical terms, $\lasu$ is a {\em universal cylinder} for the gauge relation, which is in turn equivalent to the classical Quillen homotopy notion for DGL's.

Now, observe that the injection $\lib(V)\hookrightarrow T(V)$ of the free Lie algebra generated by  $V$   into the tensor algebra induces an injection
$$
\widehat\lib(V)=\varinjlim_n \lib(V)/ \lib^{\ge n}(V)\hookrightarrow \varinjlim_n T(V)/T^{\ge n}(V)=\widehat T(V)
$$
into the {\em complete tensor algebra}. In particular,
$$
\lasu\hookrightarrow \widehat T(\alpha,\beta,x).
$$
In view of the following result, this permits an equivalent formulation of Theorem \ref{lasu1}, and thus of the gauge action, in the category of associative differential graded algebras.
\begin{theorem}\label{lasu2}{\em\cite[Thm.3.3]{bumu1}} The derivation   given by $D\alpha=-\alpha\otimes\alpha$, $D\beta=-\beta\otimes\beta$ and
$$
Dx=x\otimes\beta-\beta\otimes x+\sum_{k\ge0}\sum_{p+q=k}(-1)^q\frac{B_{p+q}}{p!q!}x^{\otimes p}\otimes(\beta-\alpha)\otimes x^{\otimes q},
$$ makes  $\widehat T(\alpha,\beta,x)$ a differential graded  algebra for which the above injection is a differential map.
\end{theorem}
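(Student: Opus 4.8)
The plan is to apply the universal enveloping algebra functor $U\colon\lie\To\alg$ to the DGL $\lasu$ of Theorem \ref{lasu1} and read off the resulting differential on the generators. The key structural fact I would invoke is that $U$ is a functor sending the Lie bracket to the graded commutator: the canonical inclusion $\lasu\hookrightarrow U\lasu=\tasu$ is a morphism of DGL's, where $\tasu$ is given its commutator bracket, so the differential $D$ on $\tasu$ restricts to $\partial$ on $\lasu$. Since $\lasu$ is free, $\tasu=\widehat T(\alpha,\beta,x)$ is free as an algebra and $D$ is the unique derivation determined by its values on $\alpha,\beta,x$; those values are obtained from $\partial\alpha,\partial\beta,\partial x$ by replacing every Lie bracket with the corresponding commutator in $\widehat T(\alpha,\beta,x)$.

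First I would dispatch $\alpha$ and $\beta$. Being Maurer-Cartan elements of degree $-1$, they satisfy $\partial\alpha=-\tfrac12[\alpha,\alpha]$ and $\partial\beta=-\tfrac12[\beta,\beta]$. Under $U$ the bracket becomes the graded commutator, and since $|\alpha|=-1$ one computes $[\alpha,\alpha]=\alpha\otimes\alpha-(-1)^{|\alpha||\alpha|}\alpha\otimes\alpha=2\,\alpha\otimes\alpha$, whence $D\alpha=-\alpha\otimes\alpha$, and likewise $D\beta=-\beta\otimes\beta$, matching the claimed formulas.

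The substantive computation is $Dx$. I would first establish, by induction on $i$, the standard associative expansion of the adjoint of the degree-zero element $x$,
$$
\ad_x^i(y)=\sum_{p+q=i}(-1)^q\binom{i}{q}\,x^{\otimes p}\otimes y\otimes x^{\otimes q},
$$
valid in $\tasu$ because $|x|=0$ makes $\ad_x$ act as the ordinary commutator $y\mapsto x\otimes y-y\otimes x$ with no Koszul signs. Feeding this into $\partial x=\ad_x\beta+\sum_{i\ge0}\frac{B_i}{i!}\ad_x^i(\beta-\alpha)$ from Theorem \ref{lasu1}, the first term gives $\ad_x\beta=x\otimes\beta-\beta\otimes x$, reproducing the first two summands of $Dx$. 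For the infinite sum I would use the elementary identity $\frac{B_i}{i!}\binom{i}{q}=\frac{B_{p+q}}{p!q!}$ (with $p+q=i$) to rewrite
$$
\sum_{i\ge0}\frac{B_i}{i!}\ad_x^i(\beta-\alpha)=\sum_{k\ge0}\sum_{p+q=k}(-1)^q\frac{B_{p+q}}{p!q!}\,x^{\otimes p}\otimes(\beta-\alpha)\otimes x^{\otimes q},
$$
which is exactly the remaining term of $Dx$.

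The computation is short once the functorial dictionary is in place, so the main obstacle is entirely bookkeeping: transporting the graded commutator correctly through $U$ and keeping the Koszul signs straight. The one spot where a sign genuinely matters is the self-bracket $[\alpha,\alpha]$ of the odd element $\alpha$, which produces the factor $2$ that cancels the $\tfrac12$ of the Maurer-Cartan equation; by contrast, because $x$ is even, no signs intrude into the $\ad_x^i$ expansion, so the Bernoulli part transcribes cleanly. I would therefore spend most of the care verifying these conventions and the inductive adjoint formula, after which collecting terms is immediate.
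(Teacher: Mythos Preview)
Your proposal is correct and is precisely the approach the paper indicates: the paper does not give an in-text proof of this theorem but cites \cite[Thm.~3.3]{bumu1} and remarks that the statement is obtained from Theorem~\ref{lasu1} by ``the computation of $U\lasu$, together with the induced differential.'' Your argument carries this out explicitly---replacing Lie brackets by graded commutators under $U$, using the Maurer-Cartan equation for the odd generators, and expanding $\ad_x^i$ via the standard binomial commutator identity for the degree-zero element $x$---and all the sign and coefficient bookkeeping you record is correct.
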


We now show that  the three Theorems \ref{main}, \ref{lasu1} and \ref{lasu2} are all equivalent. Indeed, we will devote the rest of the section to show that the equation
$$
D^2=0$$ in Theorem \ref{lasu2}
is equivalent to the Euler-type identities of Theorem \ref{main}.

For simplicity on the notation we will drop  the $\otimes$ sign henceforth, so that
$
D\alpha=-\alpha^2$, $D\beta=-\beta^2$ and
\begin{equation}\label{formula1}
Dx=x\beta-\beta x+\bchl.
\end{equation}
We will also write
$$
y=\beta-\alpha,\qquad c_{(p,q)}=(-1)^q\frac{B_{p+q}}{p!q!},\qquad
\Phi=\sum_{k=0}^\infty\sum_{p+q=k}c_{(p,q)}x^pyx^q.
$$
Observe that, while  $D^2\alpha=D^2\beta=0$ holds trivially, a short computation shows that $D^2x=0$ is equivalent to
\begin{equation}\label{formula2}
D\Phi=-(\Phi\otimes\beta+\beta\otimes\Phi).
\end{equation}
Now, as $D$ is a derivation, it is easy to deduce from (\ref{formula1}) that
$$Dx^m=x^m\bes -\bes x^m+\sum_{i+j=m-1} x^i\Phi x^j.$$
Then, we have:\let\thefootnote\relax\footnote{$^{(*)}$A straightforward computation shows that $x^p(\bes y+Dy+y\bes )x^q=x^py^2x^q$.}
\begin{eqnarray*}
D(x^pyx^q)
&=&\Big( x^p\bes -\bes x^p+ \sum_{i+j=p-1}x^i {\Phi} x^j  \Big) yx^q+x^p(Dy)x^q\\
&&-x^py \Bigl( x^q\bes -\bes x^q+
\sum_{i+j=q-1}x^i {\Phi} x^j \Bigr)\\
&\stackrel{(*)}{=}&-(\bes x^pyx^q+x^pyx^q\bes)+ \Gamma_{(p,q)},
\end{eqnarray*}
where
\begin{equation}\label{gamma}
\Gamma_{(p,q)}=x^py^2x^q+\Bigl(\sum_{i+j=p-1}x^i {\Phi} x^j\Bigr)yx^q-x^py \Bigl(\sum_{i+j=q-1}x^i {\Phi} x^j \Bigr).
\end{equation}


Therefore,
\begin{eqnarray*}
D{\Phi} &=&\sum_{k=0}^\infty \sum_{p+q=k}c_{(p,q)}D(x^pyx^q)\\
&=&\sum_{k=0}^\infty \sum_{p+q=k}c_{(p,q)}\Bigl( -(\bes x^pyx^q+x^pyx^q\bes)+ \Gamma_{(p,q)}\Bigr)\\
&=&-\Bigl( {\Phi} \bes+\bes{\Phi} \Bigr)+\sum_{k=0}^\infty \sum_{p+q=k}c_{(p,q)}\Gamma_{(p,q)}.
\end{eqnarray*}
Hence, at the sight of (\ref{formula2}), we conclude that $D^2=0$ is equivalent to
\begin{equation}\label{gammas}
\sum_{k=0}^\infty \sum_{p+q=k}c_{(p,q)}\Gamma_{(p,q)}=0.
\end{equation}
Observe that the left hand side of the above equation can be rewritten as,
$$
\sum_{k=0}^\infty \sum_{p+q=k}c_{(p,q)}\Gamma_{(p,q)}=\sum_{n\ge1}\sum_{a+b+c=n-1}\gamma_{(a,b,c)}x^ayx^byx^c.
$$
 Thus, $D^2=0$ if and only if,  {for any triple of non negative integers $(a,b,c)$, we have}
$$
 \gamma_{(a,b,c)}=0.
$$
  Hence, Theorem \ref{main} will be established if we show that, for any even $n\ge 4$,
  $$
  {n!}\gamma_{(a,b,c)}=\sum_{\substack{k=0\\ k\,\,\text{even}}}^n \lambda _{k} B_kB_{n-k},\ \ {a+b+c=n-1,}
  $$
  with $\lambda_{k}$ as in (\ref{mainequations}).
For it, we find all the terms of the left hand side of (\ref{gammas}) contributing to the monomial  $x^ayx^byx^c$ for a fixed $(a,b,c)$.

Substituting formula $(\ref{gamma})$ and the definition of $\Phi $ into the left hand side of $(\ref{gammas})$, let us write $\gamma_{(a,b,c)}$ as
$$
\gamma_{(a,b,c)}=c_{(a,c)}\delta_{b}^0+\sum_{i=0}^a\sum_{j=0}^bc_{(i+j+1,c)}c_{(a-i,b-j)}-\sum_{i=0}^b\sum_{j=0}^cc_{(a, i+j+1)}c_{(b-i, c-j)},
$$
 where the first term comes from the term $x^py^2x^q$ in formula $(\ref{gamma})$. Here, $\delta$ stands for the usual Kronecker's delta.

Now, writing $k=a+b-i-j$ and $\ell=a-i$ in the first summation, and $k=b+c-i-j$ and $\ell=c-j$ in the second, we get,
$$(-1)^c\frac{B_{a+c}}{a!c!}\delta_{b}^0+\sum_{k=0}^{n}B_kB_{n-k}\frac{(-1)^c}{c!(n-c-k)!}\Bigr( \sum_{\ell=\text{max}(0, k-b)}^{\text{min}(a,k)}\frac{(-1)^{k-\ell}}{\ell !(k-\ell)!}\Bigl)$$
\begin{equation}\label{formula4}
-\sum_{k=0}^{n}B_{k}B_{n-k}\frac{(-1)^{n-k-a}}{a!(n-k-a)!}\Bigr(\sum_{\ell=\text{max}(0, k-b)}^{\text{min}(c,k)}\frac{(-1)^{\ell}}{\ell!(k-\ell)!}\Bigl)=0.
\end{equation}
Note that, the summations, which should be  $\sum_{k=0}^{a+b}$ and $\sum_{k=0}^{b+c}$, have been replaced by  $\sum_{k=0}^n$ (recall that $a+b+c=n-1$). Indeed, for the first summand, if $k\geq a+b+1$  then, $\text{min}(a, k)=a$ and $\text{max}(0, k-b)>a$. For the second summand, if $k\geq b+c+1$ then, $\text{min}(c, k)=c$ and $\text{max}(0, k-b)>c$.

{A short computation shows that ,whenever $b=0$, the terms in $B_1B_{n-1}$ equal $-(-1)^c\frac{B_{a+c}}{a!c!}$ and then they cancel with the first term.}

{Note also that, since odd Bernoulli numbers different from $B_1$ vanish, identity $(\ref{formula4})$ is tautologous unless $n$ is even${}^{\dagger}$\footnote{${}^{\dagger}$ In fact, this holds for $n$ even with $n\geq 4$, since for $n=2$ we obtain an identity of the form $\gamma_0B_0B_2+\gamma_1B_1B_1+\gamma_2B_2B_0=0$, which is not an Euler-type identity in our sense.} and it can be written as a sum over $k$ even, without the term $(-1)^c\frac{B_{a+c}}{a!c!}\delta_{b}^0$.}


Finally, multiplying each term by $n!$,  equation (\ref{formula4}) reduces to
$$\sum_{\substack{k=0\\ k\,\,\text{even}}}^n\lambda_{k} B_kB_{n-k}=0$$
with
\begin{eqnarray*}
\lambda_{k}=\binom{n}{k}\Bigg[&(-1)^c&\binom{n-k}{c}\sum_{\ell=\text{max\,}(0, k-b)}^{\text{min\,}(a,k)}(-1)^{k-\ell }\binom{k}{\ell }\\
-&(-1)^a&\binom{n-k}{a}\sum_{\ell=\text{max\,}(0, k-b)}^{\text{min\,}(c,k)}(-1)^{{\ell }}\binom{k}{\ell }\Bigg],
\end{eqnarray*}
and Theorem \ref{main} is proved.

We finish  by giving a ``condensed'' version of our main result which will be used in the next section. For it, as Euler-type equations can be thought of  as homogeneous equations on the Bernoulli numbers, they can be simplified as follows.
\begin{definition}

Given an Euler-type identity
$$\displaystyle\sum_{\substack{k=0\\ k\,\,\text{even}}}^n \lambda _k B_kB_{n-k}=0,
$$
its {\em condensed form} is defined as
{
$$\sum_{\substack{k=0\\ k\,\,\text{even}}}^{\frac{n}{2}-1}\mu_k B_kB_{n-k}+\frac{1}{2}\mu_{\frac{n}{2}}B_{\frac{n}{2}}^2=0,$$}
{where $\mu_k=\lambda_k+\lambda_{n-k}$, for $k$ even, $k\leq \frac{n}{2}$.}
\end{definition}

Then, the condensed form of Theorem \ref{main} reads:

\begin{theorem}\label{mainc}
For any even integer $n\geq 4$ and any triple of integers $(a,b,c)$ such that $a+b+c=n-1$,
there is a condensed Euler-type identity
$$
\sum_{\substack{k=0\\ k\,\,\text{even}}}^{\frac{n}{2}-1}\mu_k B_kB_{n-k}+\frac{1}{2}\mu_{\frac{n}{2}}B_{\frac{n}{2}}^2=0,
$$
in which
\begin{equation*}
\mu_k=\binom{n}{k}\Bigg[(-1)^c\binom{n-k}{c}\sum_{\ell=\text{max\,}(0, k-b)}^{\text{min\,}(a,k)}(-1)^{{k-}\ell }\binom{k}{\ell }
\end{equation*}
\begin{equation*}
+(-1)^c\binom{k}{c}\sum_{\ell=\text{max\,}(0,n-k-b)}^{\text{min\,}(a,n-k)}(-1)^{{n-k-}\ell }\binom{n-k}{\ell }
\end{equation*}
\begin{equation*}
-(-1)^a\binom{n-k}{a}\sum_{\ell=\text{max\,}(0, k-b)}^{\text{min\,}(c,k)}(-1)^{\ell }\binom{k}{\ell }
\end{equation*}
\begin{equation}\label{dobladas}
-(-1)^a\binom{k}{a}\sum_{\ell=\text{max\,}(0, n-k-b)}^{\text{min\,}(c,n-k)}(-1)^{\ell }\binom{n-k}{\ell }\Bigg].
\end{equation}
\end{theorem}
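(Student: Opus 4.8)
The plan is to deduce Theorem \ref{mainc} from Theorem \ref{main} purely formally, by folding the Euler-type sum $\sum_{k=0}^n \lambda_k B_k B_{n-k}=0$ about its midpoint. Since the product $B_k B_{n-k}$ is invariant under the involution $k\mapsto n-k$, pairing the index $k$ with $n-k$ immediately gives
$$
\sum_{k=0}^n \lambda_k B_k B_{n-k} = \sum_{k=0}^{n/2-1}(\lambda_k+\lambda_{n-k})\,B_k B_{n-k} + \lambda_{n/2}\, B_{n/2}^2,
$$
so that the condensed coefficients of the Definition are $\mu_k=\lambda_k+\lambda_{n-k}$ for $k<\tfrac n2$ and $\mu_{n/2}=\lambda_{n/2}$. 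Hence the entire content of the statement reduces to producing the closed form (\ref{dobladas}) for $\lambda_k+\lambda_{n-k}$.

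First I would compute $\lambda_{n-k}$ by substituting $k\mapsto n-k$ directly into (\ref{mainequations}). Using $\binom{n}{n-k}=\binom{n}{k}$ to factor out the common prefactor, the outer binomials $\binom{n-k}{c}$ and $\binom{n-k}{a}$ turn into $\binom{k}{c}$ and $\binom{k}{a}$, the inner binomials $\binom{k}{\ell}$ become $\binom{n-k}{\ell}$, and the summation bounds $\max(0,k-b)$, $\min(a,k)$, $\min(c,k)$ become $\max(0,n-k-b)$, $\min(a,n-k)$, $\min(c,n-k)$. Adding this to $\lambda_k$ then reproduces exactly the four summands of (\ref{dobladas}): the two terms carrying $\binom{n-k}{c}$ and $\binom{n-k}{a}$ are the pieces of $\lambda_k$, while the two terms carrying $\binom{k}{c}$ and $\binom{k}{a}$ are the pieces of $\lambda_{n-k}$. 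This step is a direct transcription with no hidden difficulty.

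A secondary point is the parity and sign bookkeeping. In the derivation of Theorem \ref{main} the inner sums carry $(-1)^{k-\ell}$, replaced by $(-1)^\ell$ in the stated formula because only even indices $k$ survive (odd Bernoulli numbers vanish). Since $n$ is even, $k$ even forces $n-k$ even as well, so the genuine sign $(-1)^{(n-k)-\ell}$ occurring in $\lambda_{n-k}$ likewise collapses to $(-1)^\ell$; thus all four inner sums in (\ref{dobladas}) uniformly and correctly carry $(-1)^\ell$.

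The step I expect to be the true obstacle is the endpoint $k=\tfrac n2$. There $k\mapsto n-k$ is a fixed point, the two halves of (\ref{dobladas}) coincide, and the formula evaluates to $\lambda_{n/2}+\lambda_{n/2}=2\lambda_{n/2}$, whereas the folding above requires the single weight $\mu_{n/2}=\lambda_{n/2}$. This is precisely the discrepancy addressed by the Remark: when $\tfrac n2$ is odd one has $B_{n/2}=0$, the middle summand $\mu_{n/2}B_{n/2}^2$ vanishes, the value of $\mu_{n/2}$ is immaterial, and (\ref{dobladas}) may be used verbatim; when $\tfrac n2$ is even one must instead retain the undoubled weight $\lambda_{n/2}$ dictated by the Definition. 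Making this endpoint convention explicit, and checking it against the folding identity, is the only genuinely delicate part of the argument; everything else is the formal symmetrization of Theorem \ref{main} described above.
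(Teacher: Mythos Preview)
Your approach is exactly the paper's: Theorem \ref{mainc} is not given a separate proof there but is simply announced as ``the condensed form of Theorem \ref{main} reads,'' i.e.\ one applies the Definition of condensed form to the identity of Theorem \ref{main}, which amounts precisely to the folding $\mu_k=\lambda_k+\lambda_{n-k}$ you carry out. Your transcription of $\lambda_{n-k}$ from (\ref{mainequations}) and the identification of the four summands of (\ref{dobladas}) are correct, and your parity remark about $(-1)^{k-\ell}$ versus $(-1)^{\ell}$ matches the paper's own reduction.

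Your scrutiny of the endpoint $k=\tfrac n2$ is in fact more careful than the paper's treatment: the paper only remarks that $\mu_{n/2}$ is immaterial when $\tfrac n2$ is odd, and does not explicitly address the factor-of-two discrepancy between (\ref{dobladas}) evaluated at $k=\tfrac n2$ and the Definition's prescription $\mu_{n/2}=\lambda_{n/2}$ when $\tfrac n2$ is even. So this is not a gap in your argument but rather a point on which you are being more precise than the source.
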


\section{Miki and Euler's identities}

For any even natural number $n\ge 4$, denote by $\Pi_{n-1}$  the ``natural valued" plane $x+y+z=n-1$, that is, $\Pi_{n-1}=\{(a,b,c)\in\bn^3,\,\,a+b+c=n-1\}$. Observe that Theorem \ref{main} defines a map
$$
f\colon \Pi_{n-1}\longrightarrow \bq^{{\frac{n}{2}+1}},\qquad f(a,b,c)=(\lambda_0,{\lambda_2},\dots,\lambda_n),
$$
whose image is contained in the $\bq$-vector space of solutions of the Euler-type equation
$$\sum_{\substack{k=0\\ k\,\,\text{even}}}^n\lambda_k B_kB_{n-k}=0.$$

It is immediate to observe that the classical Euler's equation (\ref{euler}) corresponds to $f(n-1,0,0)$. Indeed, applying directly Theorem \ref{main}, an easy computation yields,
\begin{eqnarray*}
\lambda_0&=&n+1,\qquad\lambda_n=0,\\
\lambda_k&=&\binom{n}{k},\qquad 1\le k< n, \ {k\ \text{even}.}
\end{eqnarray*}

In the same way,  the condensed version in Theorem \ref{mainc} provides a map
$$
g\colon \Pi_{n-1}\longrightarrow \bq^{{\lfloor\frac{n}{4}\rfloor}+1},\ \ g(a,b,c)={\left\{ \begin{array}{lcc}
             (\mu_0,\mu_2,\dots ,\mu_{\frac{n}{2}-1}) & \ \text{if}\ \frac{n}{2}\ \text{is odd,} \\
             (\mu_0,\mu_2,\dots ,\mu_{\frac{n}{2}-2}, \frac{1}{2}\mu_{\frac{n}{2}}) & \ \text{if}\ \frac{n}{2}\ \text{is even,}
             \end{array}
   \right. }
$$
({where $\lfloor\frac{n}{4}\rfloor$ denotes the integer part of $\frac{n}{4}$)}, whose image is contained in the $\bq$-vector space of solutions of the condensed Euler-type equation
$$
\sum_{\substack{k=0\\ k\,\,\text{even}}}^{\frac{n}{2}-1}\mu_k B_kB_{n-k}+\frac{1}{2}\mu_{\frac{n}{2}}B_{\frac{n}{2}}^2=0.
$$We will prove that the condensed form of Miki's identity (\ref{miki}), which is
$$
\sum_{\substack{k=0\\ k\,\,\text{even}}}^{\frac{n}{2}-1}M_k B_kB_{n-k}+\frac{1}{2}M_{\frac{n}{2}}B_{\frac{n}{2}}^2=0,
$$
where,
$$
M_0=-H_n,\qquad
M_k=\frac{n}{(n-k)k}\Bigl( 1-\binom{n}{k}\Bigr),\quad 1\leq k\le \frac{n}{2},\ {k\ \text{even},}
$$
is  generated by $\text{Im}\, g$.

Write
$$M={\left\{ \begin{array}{lcc}
             (M_0,M_2,\dots ,M_{\frac{n}{2}-1}) & \ \text{if}\ \frac{n}{2}\ \text{is odd,} \\
             (M_0,M_2,\dots ,M_{\frac{n}{2}-2}, \frac{1}{2}M_{\frac{n}{2}}) & \ \text{if}\ \frac{n}{2}\ \text{is even,}
             \end{array}
   \right. }$$
and   denote $g(0,n-j-1,j)$ simply by $g(j)$. Then, we show that $M$ is a $\bq$-linear combination of the vectors $g(j)$, $0\le j\le \frac{n}{2}$. Explicitly:

\begin{theorem} \label{thmmiki} For any even integer $n\ge4$,
$$
M=\sum_{j=0}^{\frac{n}{2}-1}\Bigl( \frac{1}{j+1}g(j)+\frac{1}{n-j}g(j+1)\Bigr).
$$
\end{theorem}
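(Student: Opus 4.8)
The claim is an equality of coefficient vectors in $\bq^{\frac n2+1}$, so the plan is to verify it component by component. A preliminary reduction is worthwhile: since $n$ is even and $n\ge4$, the product $B_kB_{n-k}$ vanishes for every odd $k$ (one of $k$, $n-k$ is then an odd integer $\ge3$), so the odd-indexed components are immaterial to the underlying Euler-type identity and the content of the equality is carried by the components with $k$ even. I would therefore aim to match those.

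First I would specialize formula (\ref{dobladas}) to the triple $(a,b,c)=(0,n-1-j,j)$ to get a closed form for $g(j)_k$. With $a=0$ the two inner sums whose upper limit is $\min(a,\cdot)=0$ collapse to at most their $\ell=0$ term, while the remaining two are evaluated by the partial-sum identity $\sum_{\ell=0}^{m}(-1)^\ell\binom r\ell=(-1)^m\binom{r-1}{m}$. This produces an expression for $g(j)_k$ in terms of $\binom{n-k-1}{j-1}$, $\binom{k-1}{j-1}$ and $\binom{n-k-1}{j-k}$, with a case split at $j=k$; in particular one checks that $g(0)=0$, so the $j=0$ summand drops out. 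Next I reindex the right-hand side of Theorem \ref{thmmiki}: sending $j\mapsto j+1$ in the second sum collects the coefficient of each $g(j)$, the interior weight being $\frac1{j+1}+\frac1{n-j+1}=\frac{n+2}{(j+1)(n-j+1)}$ and the endpoint weight at $j=\frac n2$ being $\frac1{\frac n2+1}$.

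The case $k=0$ is clean and already shows where $H_n$ comes from: here $g(j)_0=(-1)^j\binom nj-1$, the constant parts sum to $-\sum_{j=0}^{\frac n2-1}\bigl(\frac1{j+1}+\frac1{n-j}\bigr)=-H_n$, and the binomial parts cancel after rewriting $\frac1{j+1}\binom nj=\frac1{n+1}\binom{n+1}{j+1}$ and $\frac1{n-j}\binom n{j+1}=\frac1{n+1}\binom{n+1}{j+1}$. This yields $M_0=-H_n$. For even $k\ge2$ I would insert the closed form of $g(j)_k$ into the weighted sum and evaluate the resulting alternating binomial sums, using weight conversions such as $\frac1{(j+1)(n-j+1)}\binom nj=\frac1{(n+1)(n+2)}\binom{n+2}{j+1}$ together with Vandermonde and telescoping, until the answer collapses to $\frac n{k(n-k)}\bigl(1-\binom nk\bigr)=M_k$.

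The main obstacle is this last step. Unlike $k=0$, the summands for $k\ge2$ involve $\binom{n-k-1}{j-1}$ rather than $\binom nj$, so the weight $\frac{n+2}{(j+1)(n-j+1)}$ does not fold them into a single binomial; the sum must be split at $j=k$, each alternating partial sum resummed, and the boundary contributions (including the $j=\frac n2$ endpoint) tracked carefully before the rational function $\frac n{k(n-k)}\bigl(1-\binom nk\bigr)$ appears. Organizing this cancellation — and checking that, in contrast to $k=0$, no harmonic term survives for $k\ge2$ — is the delicate heart of the computation.
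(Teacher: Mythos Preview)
Your plan is the paper's plan: specialize (\ref{dobladas}) at $(0,n{-}1{-}j,j)$ to obtain a closed form for $g(j)_k$, verify the $k=0$ component directly, and for $k\ge 1$ insert the closed form into the reindexed sum with interior weight $\frac{n+2}{(j+1)(n-j+1)}$ and endpoint weight $\frac{2}{n+2}$ at $j=\tfrac n2$. Your $k=0$ argument is exactly the paper's, down to the identity $\frac1{j+1}\binom nj=\frac1{n-j}\binom n{j+1}$, and your observation $g(0)=0$ is correct.

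The gap is precisely where you locate it. ``Vandermonde and telescoping'' is too vague, and the weight conversion you quote applies to $\binom nj$, not to the factors $\binom{k-1}{j-1}$, $\binom{n-k-1}{j-1}$, $\binom{n-k-1}{j-k}$ that actually occur. The paper's device is this: for each of those three binomials, absorb the denominator $(j+1)(n-j+1)$ together with the binomial into a single expression of the form $\text{(constant)}\cdot P(j)\binom{n+2}{j+1}$, where $P$ is a polynomial in $j$ of degree at most $n-k+1<n+2$ (e.g.\ for the first piece one takes $P_k(x)=x(n-x)(n-1-x)\cdots(k+1-x)$). After the substitution $j\mapsto n-j$ the half-range sums coming from the $j<k$ and $j\ge k$ cases for the $\binom{n-k-1}{\cdot}$ pieces combine into a single full-range alternating sum, and one then invokes the finite-difference identity
\[
\sum_{j=0}^{N}(-1)^{j}P(j)\binom{N}{j}=0\qquad(\deg P<N),
\]
which annihilates the bulk of each sum. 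What remains is a short list of polynomial evaluations at $j=-1,\;n+1,\;k,\;\tfrac n2$, and those collapse algebraically to $\frac{n}{k(n-k)}\bigl(1-\binom nk\bigr)$. This is the organizing principle that turns the ``delicate heart'' into a finite, mechanical check; without it the cancellation is genuinely hard to see.

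One small correction: the theorem asserts equality of vectors in $\bq^{\frac n2+1}$, not merely equality of the associated condensed identities, so you cannot discard the odd-indexed components. Fortunately the paper's closed form for $g(j)_k$ and the computation above are uniform in $k$, so no separate argument is needed.
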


\begin{proof}
Denote the $k$th component of $g(k)$ by $g(j)_k$ .

Applying directly formula (\ref{dobladas}) in Theorem \ref{mainc} we obtain that,  for $0\le j\le n-1$,
$$
g(j)_0=(-1)^j\binom{n}{j}-1.
$$
Then, taking into account that $\frac{1}{j+1}\binom{n}{j}=\frac{1}{n-j}\binom{n}{j+1}$, a short computation shows  that
$$
\sum_{j=0}^{\frac{n}{2}-1}\Bigl( \frac{1}{j+1}g(j)_0+\frac{1}{n-j}g(j+1)_0\Bigr)=-\Bigl(1+\frac{1}{2}+\cdots+\frac{1}{n}\Bigr)=-H_n=M_0.
$$

On the other hand, for  any even $0<k\le {\frac{n}{2}}$, it is also easy to check that equation (\ref{dobladas}) in Theorem \ref{mainc} translates to:

$$
g(j)_k=\begin{cases}
(-1)^j\binom{n}{k}\Bigl[ \binom{k-1}{j-1}+\binom{n-k-1}{j-1}\Bigr],\qquad\text{if $j<k$},\\
   \\
(-1)^j\binom{n}{k}\Bigl[ \binom{n-k-1}{j-1}+\binom{n-k-1}{j-k}\Bigr],\qquad\text{if $j\ge k$}.\\
\end{cases}
$$
Then,
\begin{eqnarray*}
\sum_{j=0}^{\frac{n}{2}-1}\Bigl( \frac{1}{j+1}g(j)_k+\frac{1}{n-j}g(j+1)_k\Bigr)=
&\sum_{j=1}^{\frac{n}{2}-1}\frac{n+2}{(j+1)(n-j+1)}g(j)_k+\frac{2}{n+2}g({\frac{n}{2}})_k&\\
=&\binom{n}{k}(n+2)\Bigl[ \sum_{j=1}^{k-1}\frac{(-1)^j}{(j+1)(n-j+1)}\binom{k-1}{j-1}\Bigr]&(A)\\
+&\binom{n}{k}(n+2)\Bigl[\sum_{j=k}^{\frac{n}{2}-1}\frac{(-1)^j}{(j+1)(n-j+1)}\binom{n-k-1}{j-k}\Bigr]&(B)\\
+&\binom{n}{k}(n+2)\Bigl[\sum_{j=1}^{\frac{n}{2}-1}\frac{(-1)^j}{(j+1)(n-j+1)}\binom{n-k-1}{j-1}\Bigr]&(C)\\
+&\binom{n}{k}\frac{2}{n+2}(-1)^{\frac{n}{2}}\Bigl[ \binom{n-k-1}{\frac{n}{2}-1}+\binom{n-k-1}{\frac{n}{2}-k} \Bigr]&(D)
\end{eqnarray*}

Next, we modify these expressions as follows:
\begin{eqnarray*}
(A)
&=&\binom{n}{k}(n+2)\Bigl[ \sum_{j=1}^{k-1}(-1)^j\frac{j(n-j)(n-j-1)\cdots (k+1-j)(k-1)!}{(j+1)!(n-j+1)!}\Bigr]\\
&=&\binom{n}{k}(n+2)\Bigl[ \frac{(k-1)!}{(n+2)!}\sum_{j=1}^{k-1}(-1)^jP_k(j)\binom{n+2}{j+1}\Bigr]\\
&=&\binom{n}{k}\frac{(k-1)!}{(n+1)!}\Bigl[\sum_{j=0}^{n}(-1)^jP_k(j)\binom{n+2}{j+1}-P_k(k)\binom{n+2}{k+1}\Bigr],\\
\end{eqnarray*}
where $P_k(x)=x(n-x)(n-1-x)\cdots (k+1-x)$.
We now use a well known result from the theory of finite differences which asserts that, for any polynomial $P(x)$ of degree less than $n$,
\begin{equation}\label{ayay}
\sum_{j=0}^n (-1)^jP(j)\binom{n}{j}=0.
\end{equation}
This reduces $(A)$ to
$$
\binom{n}{k}\frac{(k-1)!}{(n+1)!}\Bigl[ P_k(-1)+P_k(n+1)-P_k(k)\binom{n+2}{k+1}\Bigr].
$$

 On the other hand:
\begin{eqnarray*}
(B)
&=&\binom{n}{k}(n+2)\Bigl[ (n-k-1)!\sum_{j={k}}^{\frac{n}{2}-1}(-1)^j\frac{(n-j)j(j-1)(j-2)\cdots (j-k+1)}{(n-j+1)!(j+1)!}\Bigr]\\
&=&\binom{n}{k}(n+2)\Bigl[ \frac{(n-k-1)!}{(n+2)!}\sum_{j={k}}^{\frac{n}{2}-1}(-1)^jQ_k(n-j)\binom{n+2}{j+1}\Bigr]\\
&=&\binom{n}{k}\frac{(n-k-1)!}{(n+1)!}\Bigl[ \sum_{j={\frac{n}{2}+1}}^{{n-k}}(-1)^jQ_k(j)\binom{n+2}{n-j+1}\Bigr],\\
\end{eqnarray*}
where $Q_k(x)=x(n-x)(n-1-x)\cdots (n-k+1-x)$, and in the last step we have replaced $n-j$ by $j$.
\begin{eqnarray*}
(C)
&=&\binom{n}{k}(n+2)\Bigl[ (n-k-1)!\sum_{j=1}^{\frac{n}{2}-1}(-1)^j\frac{j(n-j)(n-j-1)\cdots (n-j-k+1)}{(j+1)!(n-j+1)!}\Bigr]\\
&=&\binom{n}{k}(n+2)\Bigl[ \frac{(n-k-1)!}{(n+2)!}\sum_{j=1}^{\frac{n}{2}-1}(-1)^jQ_k(j)\binom{n+2}{n-j+1}\Bigr]\\
&=&\binom{n}{k}\frac{(n-k-1)!}{(n+1)!}\Bigl[ \sum_{j=1}^{\frac{n}{2}-1}(-1)^jQ_k(j)\binom{n+2}{n-j+1}+\sum_{j=n}^{n-k+1}(-1)^jQ_k(j)\binom{n+2}{n-j+1}\Bigr].
\end{eqnarray*}
Therefore,
\begin{eqnarray*}
(B)+(C)&=&\binom{n}{k}\frac{(n-k-1)!}{(n+1)!}\Bigl[ \sum_{j=0}^{n}(-1)^jQ_k(j)\binom{n+2}{n-j+1}-(-1)^{\frac{n}{2}}Q_k(\frac{n}{2})\binom{n+2}{\frac{n}{2}+1}\Bigr],\\
\end{eqnarray*}
which, in view of formula (\ref{ayay}), it becomes
$$\binom{n}{k}\frac{(n-k-1)!}{(n+1)!}\Bigl[Q_k(-1)+Q_k(n+1)-(-1)^{\frac{n}{2}}Q_k(\frac{n}{2})\binom{n+2}{\frac{n}{2}+1}\Bigr].
$$

Finally,
\begin{eqnarray*}
(D)&=&\frac{4}{(n+2)}(-1)^{\frac{n}{2}}\frac{(n-k-1)!}{(\frac{n}{2}-k)!(\frac{n}{2}-1)!}\frac{n!}{k!(n-k)!}.
\end{eqnarray*}
By simple evaluation we have:
\begin{align*}
P_k(-1)&=-\frac{(n+1)!}{(k+1)!};\quad
P_k(n+1)=(n+1)(n-k)!;\quad
P_k(k)=k(n-k)!;\\
Q_k(-1)&=-\frac{(n+1)!}{(n-k+1)!};\quad
Q_k(n+1)=(n+1)k!;\quad
Q_k(\frac{n}{2})=\frac{n}{2}\frac{(\frac{n}{2})!}{(\frac{n}{2}-k)!}.
\end{align*}
Then, a straightforward computation, substituting the above identities in the corresponding equations, reduces $(A)+(B)+(C)+(D)$ to
$$
\binom{n}{k}\Bigl[ -\frac{1}{(k+1)k}+\frac{1}{n\binom{n-1}{k-1}}-\frac{n+2}{(k+1)(n-k+1)} -\frac{1}{(n-k+1)(n-k)}+\frac{1}{n\binom{n-1}{k}}\Bigr].
$$
Observe that, on the one hand,
$$
\frac{1}{n\binom{n-1}{k}}+\frac{1}{n\binom{n-1}{k-1}}=\frac{n}{(n-k)k}\frac{1}{\binom{n}{k}}.
$$
On the other hand,
$$
-\frac{1}{(k+1)k}-\frac{n+2}{(k+1)(n-k+1)}-\frac{1}{(n-k+1)(n-k)}=-\frac{n}{(n-k)k}.
$$
Thus, we conclude that
\begin{eqnarray*}
\sum_{j=0}^{\frac{n}{2}-1}\Bigl( \frac{1}{j+1}g(j)_k+\frac{1}{n-j}g(j+1)_k\Bigr)&=&\binom{n}{k}\Bigl[\frac{n}{(n-k)k}\frac{1}{\binom{n}{k}}-\frac{n}{(n-k)k}\Bigr]\\
&=&\frac{n}{(n-k)k}\Bigl( 1-\binom{n}{k}\Bigr)=M_k,
\end{eqnarray*}
and the theorem is proved.
\end{proof}

We finish by presenting an unexpected symmetry relation between Euler and Miki's  identities.
\begin{theorem} \label{thmeuler} The vector of coefficients in the condensed form of the Euler's identity equals
$$-\frac{2}{n}\sum_{j=0}^{\frac{n}{2}-1}\Bigl( (n-j)g(j)+(j+1)g(j+1)\Bigr).$$
\end{theorem}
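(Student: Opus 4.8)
The plan is to follow the blueprint of the proof of Theorem \ref{thmmiki}: first pin down the coordinates of the condensed Euler vector, then simplify the proposed combination of the $g(j)$, and finally compare the two vectors coordinate by coordinate using the finite-difference identity (\ref{ayay}). First I would record the target. Since Euler's identity is $f(n-1,0,0)$ with $\lambda_0=n+1$, $\lambda_k=\binom{n}{k}$ for $1\le k<n$ and $\lambda_n=0$, the condensing procedure gives a vector $E$ with $E_0=n+1$, $E_k=2\binom{n}{k}$ for $0<k<\frac{n}{2}$, and $E_{n/2}=\binom{n}{n/2}$. I will keep in mind that only the coordinates with $k=0$ or $k$ even are actually seen by the identity, the odd ones being annihilated by the vanishing of the corresponding Bernoulli products.

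The decisive simplification is that, unlike in the Miki case, the weights attached to each $g(j)$ collapse to a constant. Indeed, in $\sum_{j=0}^{\frac{n}{2}-1}\bigl((n-j)g(j)+(j+1)g(j+1)\bigr)$ the vector $g(i)$ occurs with coefficient $(n-i)+i=n$ for $0\le i\le\frac{n}{2}-1$ and with coefficient $\frac{n}{2}$ for $i=\frac{n}{2}$. Hence the right-hand side reduces to $-2\sum_{j=0}^{\frac{n}{2}-1}g(j)-g(\frac{n}{2})$; writing $R_k$ for its $k$th coordinate, the task is to verify $R_k=E_k$ for every $k$. This contrasts with the Miki computation, where the coefficient $\frac{n+2}{(j+1)(n-j+1)}$ did not collapse, and it is precisely the ``dual'' weighting behind the symmetry relation announced at the end of the paper.

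For the evaluation I would reuse verbatim the coordinate formulas for $g(j)_0$ and $g(j)_k$ already established in the proof of Theorem \ref{thmmiki}, so that no new input is needed beyond the weight collapse. The coordinate $k=0$ is immediate: from $g(j)_0=(-1)^j\binom{n}{j}-1$, the partial alternating sum $\sum_{j=0}^{m}(-1)^j\binom{n}{j}=(-1)^m\binom{n-1}{m}$, and $\binom{n}{n/2}=2\binom{n-1}{n/2-1}$, one gets $R_0=n+1=E_0$. For $0<k<\frac{n}{2}$ I would split $\sum_{j=0}^{\frac{n}{2}-1}g(j)_k$ at $j=k$ to invoke the two branches of the formula, and evaluate each resulting partial alternating binomial sum by the same identity. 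The point is that all the terms carrying a factor $(-1)^{n/2}$ cancel in pairs by the binomial symmetries $\binom{n-k-2}{\frac{n}{2}-2}=\binom{n-k-2}{\frac{n}{2}-k}$ and $\binom{n-k-2}{\frac{n}{2}-k-1}=\binom{n-k-2}{\frac{n}{2}-1}$, leaving exactly $R_k=2\binom{n}{k}=E_k$ on the even coordinates. The middle coordinate $k=\frac{n}{2}$ and the low case $k=1$ I would handle by direct inspection.

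The hard part will be the bookkeeping in the interior coordinates $0<k<\frac{n}{2}$: tracking the two branches $j<k$ and $j\ge k$, the shifts in the summation limits, and above all the boundary behaviour, where the ``doubling'' built into formula (\ref{dobladas}) at $k=\frac{n}{2}$ must be reconciled with the coefficient $-1$ in front of $g(\frac{n}{2})$ so that the relevant coordinates still come out correctly. As in the Miki proof, the whole argument rests on the single finite-difference identity (\ref{ayay}); what remains is careful accounting of the cancellations rather than any new idea.
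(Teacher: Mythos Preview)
Your approach is correct and takes a different route from the paper's. The paper never collapses the weights: for $k=0$ it keeps the pairs intact and observes, via $(n-j)\binom{n}{j}=(j+1)\binom{n}{j+1}$, that every summand $(n-j)g(j)_0+(j+1)g(j+1)_0$ equals $-(n+1)$, whence the sum is $-\tfrac{n}{2}(n+1)$; for $k>0$ it simply asserts that ``another straightforward computation'' gives the result, with no further detail. Your observation that the coefficients telescope, reducing the right-hand side to $-2\sum_{j<n/2}g(j)-g(\tfrac{n}{2})$, is a genuine simplification the paper does not make. It turns the $k>0$ verification into a few partial alternating binomial sums of the type $\sum_{j\le m}(-1)^j\binom{N}{j}=(-1)^m\binom{N-1}{m}$, after which the residual $(-1)^{n/2}$-terms cancel by exactly the symmetries you name; so in fact the full polynomial identity~(\ref{ayay}) is not needed here at all, in contrast to the Miki computation. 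Your target $E_k=2\binom{n}{k}$ for $0<k<\tfrac{n}{2}$ is the correct reading of the condensed Euler vector (the paper's own display drops this factor of two and the $k=\tfrac{n}{2}$ term, though the slips cancel), and your caution about the doubling built into~(\ref{dobladas}) at $k=\tfrac{n}{2}$ is well placed.
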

\begin{proof}
On the one hand, observe that the condensed form of the Euler's formula is
{$$
(n+1)B_n+\sum_{\substack{k=2\\ k\,\,\text{even}}}^{\frac{n}{2}-1}2\binom{n}{k}B_kB_{n-k}+\binom{n}{\frac{n}{2}}B_{\frac{n}{2}}^2=0.
$$}

 On the other hand, taking into account that $(n-j)\binom{n}{j}=(j+1)\binom{n}{j+1}$, a short computation yields
$$\sum_{j=0}^{\frac{n}{2}-1}\Bigl( (n-j)g(j)_0+(j+1)g(j+1)_0\Bigr)=-\frac{n}{2}(n+1).$$

Finally, another
straightforward computation gives
$$\sum_{j=0}^{\frac{n}{2}-1}\Bigl( (n-j)g(j)_k+(j+1)g(j+1)_k)\Bigr)=-{n}\binom{n}{k},$$
and the proof is complete.
\end{proof}

\begin{remark} Fix an even integer $n\ge 4$ and identify a given condensed Euler-type identity with the vector in $\bq^{{\lfloor\frac{n}{4}\rfloor}+1}$ of  coefficients of the given identity. Then,  theorems \ref{thmmiki} and \ref{thmeuler} tell us that both, the condensed Miki and Euler's
 identities, belong to the vector space $\langle \text{Im\,g}\rangle$ generated by the image
 $g\colon \Pi_{n-1}\to \mathbb{Q}^{{\lfloor\frac{n}{4}\rfloor }+1}$.

 Thus, one may ask whether any given condensed Euler-type identity lives in $\langle \text{Im\,g}\rangle$, that is, whether the latter equals the subspace $V$ of $\bq^{{\lfloor\frac{n}{4}\rfloor }+1}$ of all condensed Euler-type identities. The answer is negative as the inclusion   $\langle \text{Im\,g}\rangle\varsubsetneq V$ is proper. Indeed, $\text{dim} V= \lfloor\frac{n}{4}\rfloor$ and therefore, for $n=12$, $\text{dim}\, V=3$ while a direct computation shows that $\langle \text{Im\,g}\rangle$ has dimension $2$. Summarizing:
 $$\left\{ \begin{array}{c}
\text{Condensed Miki and} \\
\text{Euler's identities} \\
\end{array} \right\}\subset\langle \text{Im\,g}\rangle\varsubsetneq \left\{ \begin{array}{c}
\text{Condensed} \\
\text{Euler-type identities} \\
\end{array} \right\} \subset \mathbb{Q}^{{\lfloor\frac{n}{4}\rfloor }+1}.$$
\end{remark}

\bigskip
\bigskip

\noindent{\sc Institut de Recherche en Math\'ematique et Physique, Universit\'e Catholique de Louvain , 2 Chemin du Cyclotron B-1348, Louvain-la-Neuve, Belgium}.\hfill\break
{jose.carrasquel@uclouvain.be}
\bigskip

\noindent {\sc Departamento de \'Algebra, Geometr\'{\i}a y Topolog\'{\i}a, Universidad de M\'alaga, Ap.\ 59, 29080 M\'alaga, Spain}.\hfill\break {aniceto@uma.es}
\hfill\break
{ubuijs@uma.es}


\begin{thebibliography}{99}

\bibitem{libro} T. Arakawa, T. Ibukiyama and M. Kaneko, {\em Bernoulli Numbers and Zeta Functions}, Springer Monographs in Mathematics, Springer 2014.

\bibitem{bumu1} U. Buijs and A. Murillo, The Lawrence-Sullivan construction is the right model of $I^+$, {\em Algebr. Geom. Topol.}, {\bf 13}(1), (2013),  577--588.

    \bibitem{bumu2} U. Buijs and A. Murillo, Algebraic models of non-connected spaces and homotopy theory of $L_\infty$ algebras, {\em Adv. in Math.}, {\bf 236} (2013), 60--91.
        
    \bibitem{bufemutan}    U. Buijs, Y. F\'elix, A. Murillo and D. Tanr\'e, Lie models of simplicial sets and
representability of the Quillen functor, preprint 2015, arXiv:1508.01442.

    \bibitem{crabb} M. C. Crabb, The Miki-Gessel Bernoulli number identity, {\em Glasgow Math. J.}, {\bf 47} (2005), 327--328.

\bibitem{dunne} G. V. Dunne and C. Schubert, Bernoulli number identities from quantum field theory, {\em Commun. in Number Theory and Physics}, {\bf 7}(2) (2013), 225--249.

\bibitem{fapan}   C. Faber and R. Pandharipande,  Hodge integrals and Gromov-Witten theory,
{\em Invent. Math.}, {\bf 139} (2000), 137--199.

    \bibitem{fu} K. Fukaya, Deformation theory, homological algebra and mirror symmetry. {\em Geometry and physics of branes (Como, 2001), Ser. High Energy Phys. Cosmol. Gravit.}, IOP Bristol (2003), 121--209.

\bibitem{gessel} I. M. Gessel, On Miki's identity for Bernouli numbers, {\em J. Number Theory} {\bf 110} (2005), 75--82.

    \bibitem{kont} M. Kontsevich, Deformation quantization of Poisson manifolds, {\em Lett. Math. Phys.}, {\bf 66}(3) (2003), 157--216.

        \bibitem{lasu} R. Lawrence and D. Sullivan, A formula for topology/deformations and its significance, {\em Fundamenta Mathematicae}, {\bf 225} (2014), 229--242.

\bibitem{miki} H. Miki, A relation between Bernoulli numbers, {\em J. Number Theory} {\bf 10} (1978), 297--302.



\bibitem{pan2} H. Pan and Z. W. Sun, Identities concerning Bernoulli and Euler polynomials, {\em Acta Arith.} {\bf 12}(1) (2006), 21--39.

    \bibitem{patan} P. E. Parent, D. Tanr\'e, Lawrence-Sullivan
models for the interval, {\em Topology and its
Applications}, {\bf 159}(1) (2012), 371--378.




\end{thebibliography}
\end{document}